\numberwithin{equation}{section}
\def\H{\mathcal H}
\def\R{\mathbb R}
\def\N{\mathbb N}
\def\Z{\mathbb Z}
\def\e{\varepsilon}
\def\00{{\bf 0}}
\def\P{\mathcal{P}}
\newcommand{\vett}[1]{\boldsymbol{#1}}
\newcommand\res{\mathop{\hbox{\vrule height 7pt width .3pt depth 0pt \vrule height .3pt width 5pt depth 0pt}}\nolimits}
\def\FF{\mathbf{F}}
\theoremstyle{plain}
\newtheorem{theorem}{Theorem} [section]
\newtheorem{corollary}[theorem]{Corollary}
\newtheorem{lemma}[theorem]{Lemma}
\newtheorem{proposition}[theorem]{Proposition}
\theoremstyle{definition}
\title[On the anisotropic Kirchhoff-Plateau problem]{On the anisotropic Kirchhoff-Plateau problem}
\author{Antonio De Rosa}
\address[A.\,De Rosa]{Courant Institute of Mathematical Sciences, New York University, 251 Mercer Street, New York 10012, NY, USA}
\email{derosa@cims.nyu.edu}
\author{Luca Lussardi}
\address[L.\,Lussardi]{Dipartimento di Scienze Matematiche ``G.L. Lagrange'', Politecnico di Torino, C.so Duca degli Abruzzi, 10129 Torino, Italy}
\email{luca.lussardi@polito.it}
\begin{document}

\baselineskip3.4ex

\vspace{0.5cm}
\begin{abstract}
{\small We extend to the anisotropic setting the existence of solutions for the Kirchhoff-Plateau problem and its dimensional reduction.
\vskip .3truecm
\noindent Keywords: Kirchhoff-Plateau, anisotropic energies.
\vskip.1truecm
\noindent 2010 Mathematics Subject Classification: 49Q20, 49Q10, 49J45, 74K10.}
\end{abstract}

\maketitle

\section{Introduction}

In this article we focus on the minimizers of the anisotropic Kirchhoff-Plateau problem. These correspond to soap films that span flexible rods under the action of general energies which are not translation or rotation invariant. The corresponding isotropic problem of minimizing the area functional has been investigated by Giusteri {\em et al.} in \cite{GLF} with only one filament, and by Bevilacqua {\em et al.} in \cite{BLM2} taking into account more complex configurations. An increasing interest has been devoted to the study of the anisotropic Plateau problem, see for instance \cite{DePDeRGhi,DePDeRGhiCPAM,DePDeRGhi2,DeRosa,DRK,HarrisonPugh16}. To model the flexible rods, we impose physical constraints, as for instance local and global non-interpenetration of matter, introduced by Schuricht in \cite{S}. Moreover we add the necessary specifications in considering a link rather than a single loop. The energy functional we minimize is given by the sum of the elastic and the potential energy for the link and the anisotropic surface energy of the film. Concerning the boundary condition, we use the definition introduced by Harrison in \cite{HarrisonPugh14}, based on the concept of linking number, which is a well-known topological invariant. For the poof in Section \ref{s2}, we rely on the result by De Lellis {\em et al.}\ \cite{DDG}, who formulate the anisotropic Plateau problem in fairly general spanning conditions. To conclude, in Section \ref{s3} we perform a dimensional reduction of the aforementioned variational problem, in the spirit of the analysis carried out in the isotropic setting in \cite{BLM3}.

\section{Notation and preliminaries}

In this section we recall notation for the geometry of curves. If $\vett x_1,\vett x_2 \colon [0,L] \to \R^3$ are two continuous and closed curves, their {\it linking number} is the integer value 
\[
{\rm Link}(\vett x_1,\vett x_2):=\frac{1}{4\pi}\int_0^L\int_0^L\frac{{\vett x_1}(s)-{\vett x_2}(t)}{|\vett x_1(s)-\vett x_2(t)|^3}\cdot \vett x_1'(s)\times \vett x_2'(t)\,dsdt.
\]
We say that $\vett x_1$ and $\vett x_2$ are {\it isotopic}, and we use the notation $\vett x_1\simeq\vett x_2$, if there exists an open neighborhood $N_1$ of $\vett x_1([0,L])$, an open neighborhood $N_2$ of $\vett x_2([0,L])$ and a continuous map $\Phi\colon N_1\times[0,1]\to\R^3$ such that $\Phi(N_1,\tau)$ is homeomorphic to $N_1$ for all $\tau$ in $[0,1]$ and
\[
\Phi(\cdot,0)=\mathrm{Identity}\,,\quad\Phi(N_1,1)=N_2\,,\quad\Phi(\vett x_1([0,L]),1)=\vett x_2([0,L])\,.
\]
Following Gonzalez et al.\,\cite{GMSM}, we define the {\it minimal global radius of curvature} of a closed curve $\vett x\in W^{1,p}([0,L];\R^3)$, with $p>1$, by
\[
\Delta(\vett x):=\inf_{s\in [0,L]}\inf_{\sigma,\tau \in [0,L]\setminus \{s\}}R(\vett x(s),\vett x(\sigma),\vett x(\tau))
\]
where $R(x,y,z)$ denotes the radius of the smallest circle containing $x,y,z$, with the convention $R(x,y,z)=+\infty$ if $x,y,z$ are collinear. The global radius of curvature determines the self-intersections of the tubular neighborhoods of a curve. More precisely, for every $r>0$ we define the {\it $r$-tubular neighborhood of $\vett x$} by
\[
U_r(\vett x)=\bigcup_{s\in [0,L]}B_r(\vett x(s)).
\]
Accordingly to Ciarlet et al.\,\cite{CN} we say that $U_r(\vett x)$ is {\it not self-intersecting} if for any $p\in \partial U_r(\vett x)$ there exists a unique $s\in [0,L]$ such that $\|p-\vett x(s)\|=r$. It turns out  (see Gonzalez et al.\,\cite{GMSM}) that $\Delta(\vett x)\ge r$ if and only if $U_r(\vett x)$ is not self-intersecting. In particular, if $\Delta(\vett x)>0$ then $\vett x$ is simple, that is $\vett x\colon [0,L)\to \R^3$ is injective. 

\section{The anisotropic Plateau problem}

First we recall that a set \(S \subset \R^3\) is said to be {\it \(2\)-rectifiable} if it can be covered, up to an \(\H^2\)-negligible set, by countably many $2$-dimensional submanifolds of class $C^1$, see \cite[Chapter 3]{SimonLN}; we also denote by $G$ the Grassmannian of unoriented $2$-dimensional planes in $\R^3$. Given a $2$-rectifiable set $S$, we denote by $T_xS$ the approximate tangent space of $S \subset \R^3$ at $x$, which exists for $\H^2$-almost every point $x \in S$ \cite[Chapter 3]{SimonLN}. The anisotropic Lagrangians considered in the rest of the note will be continuous maps 
$$
F\colon \R^3\times G\ni (x,\pi)\mapsto F(x,\pi) \in (0,+\infty) ,
$$
verifying the lower and upper bounds 
\begin{equation}\label{cost per area}
0 < \lambda \leq F(x,\pi) \leq \Lambda<\infty \qquad \forall (x,\pi)\in \R^3\times G.
\end{equation}
We also require that $F$ is {\it elliptic} {\cite[5.1.2-5.1.5]{FedererBOOK}}, that is its even and positively $1$-homogeneous extension to $\mathbb R^3\times (\Lambda_2 (\mathbb R^3)\setminus \{0\})$ is $C^2$ and it is convex in the $\pi$ variable. Given a $2$-rectifiable set $S$ and an open subset $U\subset \R^3$, we define:
\begin{equation}\label{energia}
\FF(S) := \int_S F(x,T_xS)\, d\H^2(x).
\end{equation}
Next, we need to define the spanning condition. For any $H\subset \R^3$ closed let $\mathcal C(H)$ be the class of all smooth embeddings $\gamma \colon {\mathbb{S}}^1 \to \R^3 \setminus H$. Given $\mathcal C \subset \mathcal C(H)$ closed by homotopy, namely if $\gamma \in \mathcal C$ then also $\tilde\gamma\in \mathcal C$ for any $\tilde\gamma\in [\gamma]\in \pi_1(\R^3 \setminus H)$, we denote by $\P(H,\mathcal C)$ the family of all 2-rectifiable relatively closed sets $S \subset  \R^3 \setminus H$ such that
\[
S \cap \gamma(\mathbb S^1)\ne \emptyset, \quad \forall \gamma \in \mathcal C.
\]
We recall the following result, see \cite[Theorem 2.7]{DDG}:
\begin{theorem}\label{thm plateau}
The problem
\[
\min\{\FF(S) : S \in \P(H,\mathcal C)\}
\]
has a solution $S\in\P (H,\mathcal C)$ and the set $S$ is an $(\FF , 0, \infty)$-minimal set in $\R^3\setminus H$ in the sense of Almgren {\rm\cite{Almgren76}}.
\end{theorem}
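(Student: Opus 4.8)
The plan is to apply the direct method of the calculus of variations, the two genuinely delicate points being the rectifiability and lower semicontinuity of the limiting object for the \emph{anisotropic} functional, and the persistence of the homotopic spanning condition in the limit. First I would fix a minimizing sequence $\{S_k\}\subset \P(H,\mathcal C)$, so that $\FF(S_k)\to m:=\inf\{\FF(S):S\in\P(H,\mathcal C)\}$, which is finite since $\mathcal C$ is closed under homotopy and a tubular spanning set has finite energy. The lower bound in \eqref{cost per area} gives $\lambda\,\H^2(S_k)\le \FF(S_k)\le m+1$, so the Radon measures $\mu_k:=\H^2\res S_k$, and the associated integral varifolds $V_k$, have uniformly bounded mass. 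By weak-$*$ compactness I extract a subsequence with $\mu_k\weak\mu$ and $V_k\weak V$ for some Radon measure $\mu$ and varifold $V$ in $\R^3\setminus H$.

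The heart of the argument is to show that $\mu$ is concentrated on a $2$-rectifiable set $S$ carrying a density bounded below, and that $\FF(S)\le\liminf_k\FF(S_k)=m$. The standard mechanism is a uniform lower density estimate: were the density of $\mu$ too small at some $x\in\spt\mu\setminus H$, one could replace $S_k$ inside a small ball $B_r(x)$ by the cone over $S_k\cap\partial B_r(x)$, strictly lowering $\FF$ for $r$ in a set of positive measure while remaining in $\P(H,\mathcal C)$, contradicting near-minimality. This yields $\mu(B_r(x))\ge c\,r^2$ with $c>0$ uniform, whence by Preiss' rectifiability criterion $\mu=\theta\,\H^2\res S$ with $S$ $2$-rectifiable and $\theta\ge c$. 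The subtle, genuinely anisotropic step is lower semicontinuity: unlike the area, $\FF$ is not merely the total mass of $\mu$, so one must control the tangent-plane distribution of the limit. This is exactly where the rectifiability theory for anisotropic integrands enters: the ellipticity of $F$ guarantees the \emph{atomic condition} of De Philippis--De Rosa--Ghiraldin, which forces the limiting varifold $V$ to be rectifiable and the anisotropic energy to be $\H^2$-lower semicontinuous along $V_k\weak V$. Combining this with the density lower bound identifies $V$ with $\H^2\res S$ at multiplicity one and gives $\FF(S)\le m$.

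Next I would verify that $S\in\P(H,\mathcal C)$, i.e.\ $S\cap\gamma(\mathbb S^1)\ne\emptyset$ for every $\gamma\in\mathcal C$. This is the place where the topological nature of the spanning condition is used rather than mere measure convergence: the uniform lower density bound prevents $S_k$ from collapsing away from a fixed curve, and the fact that $\mathcal C$ is a good class closed under homotopy (in the sense of Harrison--Pugh) ensures that non-emptiness of the intersection is preserved under weak-$*$ convergence together with convergence of supports in the Hausdorff sense on compact subsets of $\R^3\setminus H$. Concretely, if some $\gamma_0\in\mathcal C$ missed $S$, a tubular neighbourhood of $\gamma_0$ disjoint from $\spt\mu$ would allow one to slide the $S_k$ off $\gamma_0$, again contradicting that each $S_k$ spans. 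Hence $S$ is the desired minimizer.

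Finally, Almgren $(\FF,0,\infty)$-minimality of $S$ in $\R^3\setminus H$ is a restatement of global minimality: given an admissible Lipschitz deformation $\phi$ with $\{\phi\ne\mathrm{id}\}\cc \R^3\setminus H$ and homotopic to the identity there, for every $\gamma\in\mathcal C$ the pulled-back curve remains homotopic to $\gamma$ in $\R^3\setminus H$, hence lies in $\mathcal C$ and meets $S$, so that $\gamma$ meets $\phi(S)$ and therefore $\phi(S)\in\P(H,\mathcal C)$; consequently $\FF(\phi(S))\ge\FF(S)=m$, which is precisely the defining inequality of an $(\FF,0,\infty)$-minimal set. I expect the main obstacle to be the simultaneous handling of the two limiting issues above — extracting anisotropic lower semicontinuity and rectifiability of $V$, which genuinely requires the atomic-condition machinery and fails for general non-elliptic integrands, while \emph{at the same time} keeping the spanning condition alive through the cone-competitor and density-bound argument, since the deformations used to produce the density estimate must be chosen so as not to destroy the homotopic linking with the curves in $\mathcal C$.
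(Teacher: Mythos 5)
The paper does not actually prove this theorem: it is recalled verbatim from De Lellis--De Rosa--Ghiraldin \cite[Theorem 2.7]{DDG}, so there is no internal proof to compare against. Your sketch is, in substance, a faithful reconstruction of the strategy of that reference: direct method on a minimizing sequence, uniform lower density estimates via cone/deformation competitors, rectifiability and lower semicontinuity of the anisotropic energy via the De Philippis--De Rosa--Ghiraldin varifold machinery, stability of the homotopic spanning condition under weak-$*$ convergence, and Almgren minimality from stability of the class $\P(H,\mathcal C)$ under deformations.

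Two steps, however, are stated too loosely to survive as written. First, rectifiability of the limit does not follow from the lower bound $\mu(B_r(x))\ge c\,r^2$ via Preiss: Preiss' theorem requires the $2$-density to \emph{exist}, which in the isotropic case is extracted from monotonicity and in the anisotropic case is exactly what fails (monotonicity is unavailable for general elliptic integrands). This is why \cite{DDG} route the argument through the anisotropic varifold rectifiability theorem under the atomic condition --- a tool you do eventually invoke, so the Preiss sentence should simply be removed rather than repaired. Second, in the Almgren-minimality step the ``pulled-back curve'' $\phi^{-1}\circ\gamma$ need not exist, since an admissible Lipschitz deformation is not invertible; the correct argument is that $\P(H,\mathcal C)$ is a \emph{good class} in the sense of \cite[Def.~2.2]{DDG}, i.e.\ closed under Lipschitz deformations homotopic to the identity, which is proved there using the homotopy closure of $\mathcal C$ rather than inversion of $\phi$. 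Finally, finiteness of the infimum (and non-emptiness of $\P(H,\mathcal C)$) is a hypothesis of the cited theorem, verified in this paper where it is applied, not a consequence of $\mathcal C$ being closed under homotopy.
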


\section{The anisotropic Kirchhoff-Plateau problem}\label{s2}

\subsection{The system of linked rods}\label{par-rod}

Let $N\in \N$ and $p\in (1,+\infty)$. For every $i=1,\dots,N$, let $L^i>0$ and $\vett x_0^i,\vett t^i_0,\vett d^i_0 \in \R^3$ be such $\vett t^i_0\perp \vett d^i_0$ and $|\vett t^i_0|=|\vett d^i_0|=1$. Moreover let $\kappa^i_1,\kappa^i_2,\omega^i\in L^p([0,L^i])$ such that 
\[
w^i_1:=(\kappa_1^i,\kappa_2^i,\omega^i) \in L^p([0,L^i];\R^3), \quad w^i:=(w^i_1,\vett x_0^i,\vett t_0^i,\vett d_0^i) \in L^p([0,L^i];\R^3) \times \R^3\times \R^3\times \R^3,
\]
and 
\[
w:=(w^1_1,w^2,\dots,w^N) \in  L^p([0,L^1];\R^3) \times \prod_{i=2}^N((L^p([0,L^i];\R^3) \times \R^3 \times \R^3 \times \R^3)=:V.
\]
We endow $V$ with the natural $p$-norm, that we denote by $\|\cdot\|_V$. For any $i=1,\dots,N$ and for any $w\in V$ denote by $\vett x^i[w] \in W^{2,p}([0,L^i];\R^3)$ and $\vett t^i[w],\vett d^i[w]\in W^{1,p}([0,L^i];\R^3)$ the unique solutions of the Cauchy problem
\[
\left\{\begin{aligned}
&\vett x^i[w]'(s)=\vett t^i[w](s)\\
&\vett t^i[w]'(s)=\kappa^i_1(s)\vett d^i[w](s)+\kappa^i_2(s)\vett t^i[w](s)\times\vett d^i[w](s)\\
&\vett d^i[w]'(s)=\omega^i(s)\vett t^i[w](s)\times\vett d^i[w](s)-\kappa^i_1(s)\vett t^i[w](s)\\
&\vett x^i[w](0)=\vett x^i_0\\
&\vett t^i[w](0)=\vett t^i_0\\
&\vett d^i[w](0)=\vett d^i_0.
\end{aligned}\right.
\]
It is easy to see that $\vett t^i[w](s)\perp \vett d^i[w](s)$ and $|\vett t^i[w](s)|=|\vett d^i[w](s)|=1$ and consequently that 
\[
(\vett t^i[w](s),\vett d^i[w](s),\vett t^i[w](s)\times \vett d^i[w](s))
\]
is an orthonormal frame in $\R^3$, for any $s\in [0,L^i]$ and for any $i=1,\dots,N$. Let $\eta,\nu>0$ and consider  $\mathcal A^i(s) \subset \R^2$ be compact and simply connected such that 
\[
B_\eta(\vett 0)\subset \mathcal A^i(s)\subset B_\nu(\vett 0), \qquad \forall s\in [0,L^i],\,  i=1,\dots,N.
\] 
For any $i=1,\dots,N$ we define
\[
\Omega^i:=\{(s,\zeta_1,\zeta_2) \in \R^3: s\in[0,L^i],\,(\zeta_1,\zeta_2)\in\mathcal A^i(s) \},
\]
\begin{equation}\label{rod}
\Lambda^i[w]:=\{\vett x^i[w](s)+\zeta_1\vett d^i[w](s)+\zeta_2\vett t^i[w](s)\times\vett d^i[w^i](s) : (s,\zeta_1,\zeta_2)\in \Omega^i\},
\end{equation}
and 
\[
\Lambda[w]:=\bigcup_{i=1}^N\Lambda^i[w].
\]
The system of closed rods is subject to some constraints. First of all we assume that the midlines are closed and sufficiently smooth, that is 
\begin{itemize}
\item[\rm(C1)] $\vett x^i[w](L^i)=\vett x^i[w](0)=\vett x^i_0$, for any $i=1,\dots,N$
\end{itemize}
and
\begin{itemize}
\item[\rm(C2)] $\vett t^i[w](L^i)=\vett t^i[w](0)=\vett t^i_0$, for any $i=1,\dots,N$.
\end{itemize}
To prescribe how many times the ends of the rod are twisted before being glued together, we prescribe the linking number between the midline and a closed curve close to the midline. Precisely, for any $i=1,\dots,N$ we close up the curve $\vett x^i[w]+\tau\vett d^i[w]$, for $\tau>0$ fixed and small enough, defining, as in Schuricht \cite{S}, 
\begin{equation}\label{xtau}
\begin{aligned}
{\vett \tilde x^i}_\tau&[w](s)\\
&:=\left\{\begin{array}{ll}
\vett x^i[w](s)+\tau\vett d^i[w](s)\\
\qquad \text{if $s\in [0,L^i]$}\\
\vett x^i[w](L^i)+\tau(\cos(\varphi^i(s-L^i))\vett d^i[w](L^i)+\sin(\varphi^i(s-L^i))\vett t^i[w](L^i)\times \vett d^i[w](L^i))\\
\qquad \text{if $s\in [L^i,L^i+1]$}\\
\end{array}\right.
\end{aligned}
\end{equation}
where $\varphi^i\in [0,2\pi)$ is the unique angle between $\vett d^i_0$ and $\vett d^i[w](L^i)$ such that $\varphi^i-\pi$ has the same sign as $\vett d^i_0 \times \vett d^i[w](L^i) \cdot \vett t^i_0$. We trivially identify $\vett x^i[w]$ with its extension $\vett x^i[w](s)=\vett x^i(L^i)$ for any $s\in [L^i,L^i+1]$ and therefore we require that for any $i=1,\dots,N$ there is some $l^i\in \Z$ such that 
\begin{itemize}
\item[\rm(C3)] ${\rm Link}(\vett x^i[w],{\vett \tilde x^i}_\tau[w])=l^i$.
\end{itemize}
To encode the knot type of the midlines for any $i=1,\dots,N$ we fix continuous mapping $\vett\ell^i\colon [0,L^i]\to\R^3$ such that $\vett\ell^i(L^i)=\vett\ell^i(0)$ and we require that 
\begin{itemize}
\item[\rm(C4)] $\vett x^i[w]\simeq\vett\ell^i$.
\end{itemize}
Finally, in order to prevent the interpenetration of matter, following Ciarlet et al.\,\cite{CN} we require that for any $i=1,\dots,N$ 
\begin{itemize}
\item[\rm(C5)]
\[
\int_{\Omega^i}(1- \zeta_1k^i_2(s) + \zeta_2k^i_1(s))\,dsd\zeta_1d\zeta_2 \leq |\Lambda^i[w]|, \quad \bigcap_{i=1}^N{\rm int}(\Lambda^i[w])=\emptyset.
\]
\end{itemize}
We now require that our system of rods has a prescribed chain structure. We assume that:
\begin{itemize}
\item[\rm(C6)] for any $i=1,\dots,N$ there exist $k$ and $i_j$ for $j=1,\dots,k$ such that 
\[
|{\rm Link}(\vett x^{i}[w],\vett x^{i_1}[w])|=|{\rm Link}(\vett x^{i_j}[w],\vett x^{i_{j+1}}[w])|=\cdots=|{\rm Link}(\vett x^{i_k}[w],\vett x^{1}[w])|=1
\] 
and moreover ${\rm Link}(\vett x^{i}[w],\vett x^{j}[w])=L^{ij}$ where $L^{ij}$ do not depend on $w$.
\end{itemize}
We finally denote by $W$ the set of all constraints, namely 
\[
W:=\big\{w\in V: \text{(C1)--(C6) hold true} \big\}.
\]
It is easy to see (see Gonzalez et al.\,\cite{GMSM} and Schuricht \cite{S}) that $W$ is weakly closed in $V$.

\subsection{Energy contributions and existence of a minimizer}

In what follows we will prescribe an elastic energy of the system of rods, which is a proper function 
\begin{equation}\label{elas}
E^\mathrm{el}\colon W \to\R\cup\{+\infty\}, \qquad \text{satisfying} \qquad E_\mathrm{el}(w) \ge c\|w\|_V,
\end{equation}
for some $c>0$. The second energy contribution we want to take into account is the weight of the rods. Let $\rho^i \in L^\infty(\Omega^i)$ with $\rho\ge 0$ be the mass density functions and $\vett g$ be the gravitational acceleration. Let us define $E_\mathrm{g}\colon W \to\R\cup\{+\infty\}$ as 
\[
\begin{aligned}
E_\mathrm{g}&(w):=\sum_{i=0}^N\int_{\Omega^i}\rho^i(s,\zeta_1,\zeta_2) \vett g \cdot (\vett x^i[w](s)+\zeta_1\vett d^i[w](s)+\zeta_2\vett t^i[w](s)\times\vett d^i[w](s))\,dsd\zeta_1d\zeta_2.
\end{aligned}
\]
The last contribution we want to take into account is the soap film energy. Let $\mathcal C_w \subset \mathcal C(\Lambda[w])$ be the class of all $\gamma \in \mathcal C(\Lambda[w])$ such that there exists $i=1,\dots,N$ with 
\[
|{\rm Link}(\gamma,\vett x^i[w])|=1, \quad {\rm Link}(\gamma,\vett x^j[w])=0,\,\forall j\ne i.
\]
$\mathcal C_w$ is closed by homotopy, see \cite{HarrisonPugh14}. We finally define $E_\mathrm{sf}\colon W \to\R\cup\{+\infty\}$ as
\[
E_\mathrm{sf}(w):=\inf\big\{\FF(S) : \textrm{$S \in \mathcal P(\Lambda[w],\mathcal C_w)$}\big\}.
\]
We define the energy functional of our variational problem as
\begin{equation}
\label{EKP}
E(w):= E_{\rm el}(w) + E_{\rm g}(w)+E_{\rm sf}(w), \quad w\in W.
\end{equation}
The first main result of the paper is given by the following existence theorem.

\begin{theorem}\label{main1}
Let $\overline{E_{\rm el}}$ be the lower semicontinuous envelope of $E_{\rm el}$ with respect to the weak topology of $V$. Assume that $\inf_WE<+\infty$. Then the problem 
\[
\min_{w\in W}\overline{E_{\rm el}}(w) + E_{\rm g}(w)+E_{\rm sf}(w)
\]
has a solution $w_0 \in W$ and there exists $S_\infty \in \mathcal P(\Lambda[w_0],\mathcal C_{w_0})$ which is an $(\FF,0,\infty)$-minimal set in $\R^3 \setminus \Lambda[w_0]$ in the sense of Almgren such that 
\[
E_{\rm el}(w_0) + E_{\rm g}(w_0)+\FF(S_\infty)=\min_{w\in W}\overline{E_{\rm el}}(w) + E_{\rm g}(w)+E_{\rm sf}(w)=\inf_{w\in W}E(w).
\]
\end{theorem}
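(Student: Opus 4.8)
The plan is to run the direct method of the calculus of variations, isolating the anisotropic Plateau problem at the limiting rod configuration and then invoking Theorem \ref{thm plateau}. First I would fix a minimizing sequence $(w_n)\subset W$ for the relaxed functional $\overline{E_{\rm el}}+E_{\rm g}+E_{\rm sf}$; since $\inf_W E<+\infty$ and $\overline{E_{\rm el}}\le E_{\rm el}$, the relaxed infimum is finite. Because the norm is weakly lower semicontinuous, the coercivity bound $E_{\rm el}(w)\ge c\|w\|_V$ is inherited by the envelope, i.e. $\overline{E_{\rm el}}(w)\ge c\|w\|_V$; combined with the uniform lower bound on $E_{\rm g}$ (the midlines have fixed length and unit speed, hence stay in a fixed ball) and the nonnegativity of $E_{\rm sf}$, this forces $(w_n)$ to be bounded in $V$. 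As $V$ is a product of the reflexive spaces $L^p$ ($p>1$) with finite-dimensional factors, it is reflexive, so up to a subsequence $w_n\rightharpoonup w_0$ weakly in $V$, and since $W$ is weakly closed in $V$ (as recalled above) we have $w_0\in W$.

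Next I would control the three contributions under this weak convergence, the key mechanism being that the solution map $w\mapsto(\vett x^i[w],\vett t^i[w],\vett d^i[w])$ is continuous from weak-$V$ into $C^1\times C^0\times C^0$: weak $L^p$ convergence of the curvatures $(\kappa_1^i,\kappa_2^i,\omega^i)$ together with convergence of the initial data, inserted into the linear Cauchy problem, yields uniform convergence of the frames and $C^1$ convergence of the midlines (this is the same continuous-dependence argument underlying the weak closedness of $W$, cf. Gonzalez et al. \cite{GMSM} and Schuricht \cite{S}). Consequently $\Lambda[w_n]\to\Lambda[w_0]$ in the Hausdorff distance, the term $E_{\rm g}$ is weakly continuous, and $\overline{E_{\rm el}}$ is weakly lower semicontinuous by construction.

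The heart of the proof is the lower semicontinuity of the surface energy, $\liminf_n E_{\rm sf}(w_n)\ge E_{\rm sf}(w_0)$. I would apply Theorem \ref{thm plateau} at each $w_n$ to obtain minimizers $S_n\in\mathcal P(\Lambda[w_n],\mathcal C_{w_n})$ with $\FF(S_n)=E_{\rm sf}(w_n)$; these are $(\FF,0,\infty)$-minimal sets and, by the upper bound in \eqref{cost per area}, have uniformly bounded $\H^2$-measure. Using the compactness of Almgren-minimal sets and the lower semicontinuity of the elliptic anisotropic energy $\FF$ \cite{DDG,Almgren76}, a subsequence converges to a relatively closed $S'\subset\R^3\setminus\Lambda[w_0]$ with $\FF(S')\le\liminf_n\FF(S_n)$. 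The decisive point is that $S'$ still spans $\Lambda[w_0]$: given any $\gamma\in\mathcal C_{w_0}$, the $C^1$ convergence of the midlines makes the linking numbers $\mathrm{Link}(\gamma,\vett x^i[w_n])$ stable, so $\gamma$ (suitably perturbed off the moving obstacle) lies in $\mathcal C_{w_n}$ for large $n$; hence $S_n\cap\gamma(\mathbb S^1)\ne\emptyset$, and this intersection property passes to the limit via the closure properties of the Harrison--Pugh spanning condition \cite{HarrisonPugh14,DDG}. This gives $E_{\rm sf}(w_0)\le\FF(S')\le\liminf_n E_{\rm sf}(w_n)$, and combining the three semicontinuity statements shows $w_0$ minimizes the relaxed functional. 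Applying Theorem \ref{thm plateau} once more at the fixed configuration $w_0$ produces the asserted set $S_\infty\in\mathcal P(\Lambda[w_0],\mathcal C_{w_0})$ with $\FF(S_\infty)=E_{\rm sf}(w_0)$.

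Finally, the equality with $\inf_W E$ follows from a standard relaxation argument: since $E_{\rm g}$ is continuous and $E_{\rm sf}$ varies continuously along sequences of rods converging in $C^1$ (the matching upper bound obtained by deforming a spanning surface of $\Lambda[w_0]$ onto the nearby configurations $\Lambda[v_n]$), the functional $\overline{E_{\rm el}}+E_{\rm g}+E_{\rm sf}$ is exactly the weakly lower semicontinuous envelope of $E$, so its minimum equals $\inf_W E$ and is realized by $E_{\rm el}(w_0)+E_{\rm g}(w_0)+\FF(S_\infty)$. I expect the main obstacle to be precisely the passage of the linking-number spanning condition to the limit, and in particular verifying that $S'$ neither collapses into nor detaches from the moving obstacle $\Lambda[w_n]$; this is where the non-interpenetration constraint (C5) and the quantitative control of the tubular neighborhoods through the global radius of curvature $\Delta(\vett x)$ become essential.
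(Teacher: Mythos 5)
Your proposal is correct and follows essentially the same route as the paper: the direct method with coercivity of $E_{\rm el}$ and weak closedness of $W$, weak continuity of $E_{\rm g}$, lower semicontinuity of $E_{\rm sf}$ obtained by extracting a limit of the minimizers $S_n$ and transferring the spanning condition through the stability of linking numbers (this is exactly the content of the paper's Theorem \ref{primo2}, built on \cite[Thm.~2.5]{DDG}), and a final application of Theorem \ref{thm plateau} at $w_0$. The only inessential discrepancy is your closing speculation: the paper's passage of the spanning condition to the limit uses neither (C5) nor the global radius of curvature, only the uniform convergence of the midlines (which places $\Lambda[w_n]$ inside $U_\delta(\Lambda[w_0])$ away from the tube $U_\e(\gamma)$) together with a Lipschitz projection of $S_n\cap U_\e(\gamma)$ onto the disk $B_\e$ to contradict $\mu(U_{2\e}(\gamma))=0$.
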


\subsection{Proof of Theorem \ref{main1}}

First of all we prove that the weight and the soap film energy are weakly continuous. 

\begin{lemma}\label{cont1}
The functional $E_{\rm g}$ is weakly continuous on $W$.
\end{lemma}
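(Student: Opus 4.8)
The plan is to show that along a weakly convergent sequence $w_k \rightharpoonup w$ in $V$ (with all $w_k, w \in W$), the gravitational energy converges, $E_{\rm g}(w_k) \to E_{\rm g}(w)$. The core of the argument is that weak convergence in $V$ forces strong (indeed uniform) convergence of the geometric frames $\vett x^i[w_k], \vett t^i[w_k], \vett d^i[w_k]$, after which $E_{\rm g}$ is a continuous functional of these frames through a bounded integrand.

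Let me think about the structure. Weak convergence $w_k \rightharpoonup w$ in $V = L^p([0,L^1];\R^3)\times\prod_{i\ge 2}(\cdots)$ means that for each $i$ the curvature-twist triple $w^i_{1,k} = (\kappa^i_{1,k},\kappa^i_{2,k},\omega^i_k)$ converges weakly in $L^p([0,L^i];\R^3)$ and, for $i\ge 2$, the finite-dimensional data $(\vett x^i_{0,k},\vett t^i_{0,k},\vett d^i_{0,k})$ converge (strongly, being in $\R^3$). The frame $(\vett x^i[w_k],\vett t^i[w_k],\vett d^i[w_k])$ solves the linear Cauchy problem whose coefficient matrix is built from $w^i_{1,k}$. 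I would argue as follows. The $\R^3$-valued functions $\vett t^i[w_k], \vett d^i[w_k]$ are uniformly bounded (they are unit vectors), and their derivatives are controlled in $L^p$ by $\|w^i_{1,k}\|_{L^p}$, which is bounded by weak convergence; hence $\vett t^i[w_k], \vett d^i[w_k]$ are bounded in $W^{1,p}$ and $\vett x^i[w_k]$ is bounded in $W^{2,p}$. By Morrey/Rellich embedding ($p>1$, one dimension) these sequences are precompact in $C^0([0,L^i];\R^3)$; passing to a subsequence they converge uniformly. Identifying the limit is the key point: one writes the Cauchy problem in integral form, $\vett t^i[w_k](s) = \vett t^i_0 + \int_0^s (\text{bilinear in } w^i_{1,k} \text{ and the frame})\,d\sigma$, and passes to the limit using that the frame converges \emph{uniformly} (strongly) while $w^i_{1,k}$ converges \emph{weakly}, so that the product converges to the product of the limits. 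The limiting triple then solves the Cauchy problem with data $w$, and by uniqueness it equals $(\vett x^i[w],\vett t^i[w],\vett d^i[w])$. Since the limit is independent of the subsequence, the full sequence converges uniformly.

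\emph{The main obstacle} is exactly this passage to the limit in the integral equation, because it couples a weakly convergent factor ($w^i_{1,k}$) with a factor that is only known a priori to be weakly convergent as well ($\vett t^i[w_k], \vett d^i[w_k]$)—and a product of two merely weakly convergent sequences need not converge to the product of the limits. The resolution is to bootstrap: the compact embedding $W^{1,p}\hookrightarrow C^0$ upgrades the frame's convergence to \emph{strong} (uniform), and strong-times-weak does converge to the product of the limits. This strong convergence of the frames is the substantive content; once it is in hand the rest is routine. I expect to invoke here exactly the compactness already used in establishing that $W$ is weakly closed (the same Gonzalez et al.\,\cite{GMSM} and Schuricht \cite{S} machinery cited in the excerpt).

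With uniform convergence of the frames established, the conclusion for $E_{\rm g}$ follows by dominated convergence. The integrand of $E_{\rm g}(w_k)$ is
\[
\rho^i(s,\zeta_1,\zeta_2)\,\vett g\cdot\big(\vett x^i[w_k](s)+\zeta_1\vett d^i[w_k](s)+\zeta_2\vett t^i[w_k](s)\times\vett d^i[w_k](s)\big);
\]
since $\rho^i\in L^\infty(\Omega^i)$, the domains $\Omega^i$ are fixed with $(\zeta_1,\zeta_2)\in\mathcal A^i(s)\subset B_\nu(\vett 0)$ bounded, and $\vett x^i[w_k],\vett t^i[w_k],\vett d^i[w_k]$ converge uniformly and are uniformly bounded, the integrands converge pointwise and are dominated by a fixed $L^1(\Omega^i)$ function. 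Summing over $i=1,\dots,N$ gives $E_{\rm g}(w_k)\to E_{\rm g}(w)$, which is the claimed weak continuity.
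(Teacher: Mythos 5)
Your proof is correct and follows essentially the same route as the paper: weak convergence of the data forces uniform convergence of the frames $\vett x^i,\vett t^i,\vett d^i$ via boundedness in $W^{1,p}$/$W^{2,p}$ and compact Sobolev embedding, after which dominated convergence handles the integral. The only difference is that you justify in detail the identification of the limiting frame (via the integral equation and the weak--strong product convergence), a step the paper simply asserts as weak continuity of the solution map before invoking the embedding.
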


\begin{proof}
Let $(w_h)$ be a sequence in $W$ with $w_h \rightharpoonup w$ in $W$ for some $w\in W$. Then $\vett x^i[w_h] \rightharpoonup \vett x^i[w]$ in $W^{2,p}$ and $\vett t^i[w_h] \rightharpoonup \vett t^i[w]$, $\vett d^i[w_h] \rightharpoonup \vett d^i[w]$ in $W^{1,p}$. Then by Sobolev embedding we deduce that $\vett x^i[w_h] \to \vett x^i[w]$ in $C^{1,\alpha}$ and $\vett t^i[w_h] \to \vett t^i[w]$, $\vett d^i[w_h] \to \vett d^i[w]$ in $C^{0,\alpha}$. This is enough to pass to the limit under the sign of integral and get the claim.
\end{proof}

The continuity of the  soap film energy follows from the next theorem.

\begin{theorem}
\label{primo2}
Let $(w_h)$ be a sequence in $W$ with $w_h \rightharpoonup w$ in $W$ for some $w\in W$. Assume that
\begin{itemize}
\item[\rm(a)] $S_h\in \mathcal{P}(\Lambda[w_h],\mathcal C_{w_h})$, for every $h\in\N$;
\item[\rm(b)] $\sup_{h \in \N}\FF(S_h) = \sup_{h \in \N} \inf \{\FF(S) : S\in \mathcal{P}(\Lambda[w_h],\mathcal C_{w_h})\}<+\infty.$
\end{itemize}
Let $\mu_h:= F\mathcal H^2\res S_h$. Then the following three statement hold true:
\begin{equation}\label{c11}
\mu_h \rightharpoonup^{*} \mu \quad \textrm{(up to subsequences)},
\end{equation}
\begin{equation}\label{c21}
\mu \geq F \mathcal H^2\res S_{\infty},\hbox{ where $S_{\infty}= (\hbox{supt}\, \mu) \setminus \Lambda[w]$ is $2$-rectifiable};
\end{equation}
\begin{equation}\label{c31}
S_\infty \in \mathcal P(\Lambda[w],\mathcal C_w).
\end{equation}
\end{theorem}

\begin{proof}
We first observe that the classes $\mathcal P(\Lambda[w_h],\mathcal C_{w_h})$ and $\mathcal P(\Lambda[w],\mathcal C_w)$ are good classes in the sense of De Lellis et al.\,\cite[Def.\,2.2]{DDG}, as proved in \cite[Thm.\,2.7(a)]{DDG}. Then the proof of \eqref{c11} and \eqref{c21} follows verbatim the proof of Theorem 2.5 of \cite{DDG}. It is sufficient to observe that the convergence of $\{\Lambda[w_h]\}$ ensures that, whenever $x \in S_{\infty}$, we have $d(x,\Lambda[w_h])>0$ for $h$ large enough. We have to prove \eqref{c31}, namely that $S_{\infty}\cap \gamma(\mathbb S^1)\ne \emptyset$ for any $\gamma \in \mathcal C_w$. Assume by contradiction that there exists $\gamma \in \mathcal C_w$ with $S_{\infty}\cap\gamma(\mathbb S^1)= \emptyset$. Since $\gamma$ is compact and contained in $\R^3 \setminus \Lambda[w]$ and $S_\infty$ is relatively closed in $\R^3 \setminus \Lambda[w]$, there exists a positive $\e$ such that the tubular neighborhood $U_{2\varepsilon}(\gamma)$ does not intersect $S_{\infty}$ and is contained in $\R^3 \setminus \Lambda[w]$. Hence $\mu(U_{2\varepsilon}(\gamma)) = 0$, and thus
\begin{equation}\label{utile}
\lim_h\mathcal H^2(S_h \cap U_{\varepsilon}(\gamma))=0.
\end{equation}
Denote by $B_\varepsilon$ the open disk of $\R^2$ with radius $\varepsilon$ and centered at the origin of $\R^2$, and consider a diffeomorphism $\Phi\colon \mathbb S^1 \times B_\varepsilon \to U_{\varepsilon}(\gamma)$ such that $\Phi_{|_{\mathbb S^1\times \{0\}}}=\gamma$. Let $y$ belong to $B_\varepsilon$ and set $\gamma_{y}:=\Phi_{|_{\mathbb S^1\times \{y\}}}$. Then $\gamma_{y}$ in $[\gamma]$ represents an element of $\pi_1(\mathbb R^3 \setminus \Lambda[w])$.  Since $w_h\rightharpoonup w$ in $W$ then $\{\vett x^i[w_h]\}$ converges to $\vett x^i[w]$ strongly in $W^{1,p}([0,L];\mathbb R^3)$ for every $i=1,\dots,N$. In particular, $\{\vett x^i[w_h]\}$ converges to $\vett x^i[w]$ uniformly on $[0,L^i]$  for every $i=1,\dots,N$, which implies the existence of $\delta>0$ such that, for $h$ sufficiently large, $\Lambda[w_h]$ is contained in $U_\delta(\Lambda[w])$ with $U_\delta(\Lambda[w])\cap U_\varepsilon(\gamma)=\emptyset$. Hence, for such $h$ and $\varepsilon$ it follows that, for any $y$ in $B_\varepsilon$, $\gamma_{y}(\mathbb S^1) \subset \mathbb R^3\setminus U_\delta(\Lambda[w])$. This implies that $\|\vett x^i[w_h]-\gamma_y\|_\infty\geq \delta$  for every $i=1,\dots,N$. This estimate, together with the $W^{1,p}$ convergence of $\vett x^i[w_h]$ to $\vett x^i[w]$, implies that
\[
\lim_{h \to +\infty}\text{Link}(\vett x^i[w_h], \gamma_y)= \text{Link}(\vett x^i[w], \gamma_y), \qquad \forall i=1,\dots,N. 
\]
As a consequence, for $h$ large enough, $\gamma_y \in \mathcal C_{w_h}$ which, combined with $S_h\in \mathcal P(\Lambda[w_h],\mathcal C_{w_h})$, yields $S_h\cap \gamma_{y}(\mathbb S^1)\ne \emptyset$. Take now $\tilde \pi : \mathbb S^1 \times B_\varepsilon \to B_\varepsilon$ as the projection on the second factor and let $\hat{\pi}:=\tilde \pi \circ \Phi^{-1}$. Then, $\hat{\pi}$ is Lipschitz-continuous and $B_\varepsilon$ is contained in $\hat{\pi}(S_h \cap U_\varepsilon(\gamma))$, which entails that
\[
\pi \varepsilon^2=\mathcal H^2(B_\varepsilon)\le\mathcal H^2(\hat{\pi}(S_h \cap U_\varepsilon(\gamma))\le ({\rm Lip}\,\hat{\pi})^2\mathcal H^2(S_h \cap U_\varepsilon(\gamma))\,.
\]
We thus conclude that
\[
\mathcal H^2(S_h \cap U_\varepsilon(\gamma))\ge \frac{\pi \varepsilon^2}{({\rm Lip}\,\hat{\pi})^2}
\]
 which contradicts \eqref{utile}.
\end{proof}

{\it Proof of Theorem \ref{main1}}
First of all thanks to the weak continuity of $E_{\rm g}$ and $E_{\rm sf}$, proved in Lemma \ref{cont1} and Theorem \ref{primo2}, we deduce that $\overline{E_{\rm el}}(w) + E_{\rm g}(w)+E_{\rm sf}(w)$ is the lower semicontinuous envelope of $E$, from which we get
\[
\inf_{w\in W}\overline{E_{\rm el}}(w) + E_{\rm g}(w)+E_{\rm sf}(w)=\inf_{w\in W}E(w).
\]
Let $\{w_h\}$ be a minimizing sequence for $E_{\rm el}+ E_{\rm g}+E_{\rm sf}$. Since $\inf_WE<+\infty$ we can say that $E(w_h)\le c$ for some $c>0$. In particular, $E_{\rm el}(w_h)\le c$ and, by coercivity of $E_{\rm el}$, we have $w_h\rightharpoonup w_0$ in $W$. We deduce, using again Lemma \ref{cont1} and Theorem \ref{primo2}, that
\[
\begin{aligned}
\overline{E_{\rm el}}(w_0) + E_{\rm g}(w_0)+E_{\rm sf}(w_0)&\le \liminf_h\overline{E_{\rm el}}(w_h) + E_{\rm g}(w_h)+E_{\rm sf}(w_h)\\
&\le \liminf_h E(w_h)=\inf_WE=\inf_W\overline{E_{\rm el}} + E_{\rm g}+E_{\rm sf}.
\end{aligned}
\]
Moreover, since $E_{\rm sf}(w_0)<+\infty$, applying Theorem 2.7 of \cite{DDG} we deduce the claim.\qed

\section{Dimensional reduction of the anisotropic Kirchhoff-Plateau problem}\label{s3}

The second main result of the paper concerns the dimensional reduction. In this section we focus on a simplified setting with a single rod which has a cross section with vanishing diameter; moreover we also need to modify the constraints. For the sake of convenience we briefly rewrite the complete setting. Let $L>0$, $p\in (1,+\infty)$ and let $\kappa_1,\kappa_2,\omega\in L^p([0,L])$. Let $\vett x_0,\vett t_0,\vett d_0 \in \R^3$ be such $\vett t_0\perp \vett d_0$ and $|\vett t_0|=|\vett d_0|=1$ and let 
\[
w:=(\kappa_1,\kappa_2,\omega) \in L^p([0,L];\R^3).
\]
Denote by $\vett x[w] \in W^{2,p}([0,L];\R^3)$ and $\vett t[w],\vett d[w]\in W^{1,p}([0,L];\R^3)$ the unique solutions of the Cauchy problem
\[
\left\{\begin{aligned}
&\vett x[w]'(s)=\vett t[w](s)\\
&\vett t[w]'(s)=\kappa_1(s)\vett d[w](s)+\kappa_2(s)\vett t[w](s)\times\vett d[w](s)\\
&\vett d[w]'(s)=\omega(s)\vett t[w](s)\times\vett d[w](s)-\kappa_1(s)\vett t[w](s)\\
&\vett x[w](0)=\vett x_0\\
&\vett t[w](0)=\vett t_0\\
&\vett d[w](0)=\vett d_0.
\end{aligned}\right.
\]
For any $s\in [0,L]$ let $\mathcal A(s) \subset \R^2$ be compact and simply connected such that 
\[
B_\eta(\vett 0)\subset \mathcal A(s)\subset B_\nu(\vett 0), \qquad \forall s\in [0,L],
\] 
for some $\eta,\nu>0$. For any $\e>0$ small enough and for any $w \in L^p([0,L];\R^3)$ let
\begin{equation}\label{rod1}
\Lambda_\e[w]:=\{\vett x[w](s)+\zeta_1\vett d[w](s)+\zeta_2\vett t[w](s)\times\vett d[w](s) : (s,\zeta_1,\zeta_2)\in \Omega_\e\}
\end{equation}
where 
\[
\Omega_\e:=\{(s,\zeta_1,\zeta_2)\in \R^3 : s\in[0,L],\,(\zeta_1,\zeta_2)\in \e \mathcal A(s) \}.
 \]
The constraints are the following.  
\begin{itemize}
\item[\rm(C1)] $\vett x[w](L)=\vett x[w](0)=\vett x_0$.
\end{itemize}
\begin{itemize}
\item[\rm(C2)] $\vett t[w](L)=\vett t[w](0)=\vett t_0$.
\end{itemize}
\begin{itemize}
\item[\rm(C3)] ${\rm Link}(\vett x[w],{\vett \tilde x}_\tau[w])=l$ for some fixed $l\in \Z$, where ${\vett \tilde x}_\tau[w]$ is defined as in \eqref{xtau} (of course without the index $i$). 
\end{itemize}
\begin{itemize}
\item[\rm(C4)] $\vett x[w]\simeq\vett\ell$ for some continuous mapping $\vett\ell\colon [0,L]\to\R^3$ such that $\vett\ell(L)=\vett\ell(0)$.
\end{itemize}
Finally, in order to prevent the non-selfintersection we require that 
\begin{itemize}
\item[\rm(C5)] $\Delta (\vett x[w])\ge \Delta_0$ for some prescribed $\Delta_0>0$.
\end{itemize}
Again, we denote by $W$ the set of all constraints, namely 
\[
W:=\big\{w\in L^p([0,L];\R^3): \text{(C1)--(C5) hold true} \big\}.
\]
It turns out that $W$ is weakly closed in $L^p([0,L];\R^3)$. The main goal is to prove that sending $\e$ to 0, we recover by $\Gamma$-convergence the anisotropic Plateau problem with an elastic one dimensional boundary.  The first two energy contributions to take into account are the elastic energy $E_{\rm el}$ as in \eqref{elas} and the scaled weight 
\[
E^{\rm g}_\e(w):=\frac{1}{\e^2}\int_{\Omega_\e}\rho(s,\zeta_1,\zeta_2) \vett g \cdot (\vett x[w](s)+\zeta_1\vett d[w](s)+\zeta_2\vett t[w](s)\times\vett d[w](s))\,dsd\zeta_1d\zeta_2
\]
where $\rho\in L^\infty(\Omega_1)$ and $\rho \ge 0$. Concerning the soap film energy, similarly to the previous section, we define $\mathcal C_{\e,w} \subset \mathcal C(\Lambda_\e[w])$ as the class of all $\gamma \in \mathcal C(\Lambda_\e[w])$ such that $|{\rm Link}(\gamma,\vett x[w])|=1$ and we define $E^\mathrm{sf}_\e\colon W \to\R\cup\{+\infty\}$ as
\[
E^{\mathrm{sf}}_\e(w):=\inf\big\{\FF(S) : \textrm{$S \in \mathcal P(\Lambda_\e[w],\mathcal C_{\e,w})$}\big\}.
\]
Let $\rho_0\colon [0,L] \to \R$ be given by
\[
\rho_0(s):=\lim_{(\xi_1,\xi_2)\to (0,0)}\rho(s,\xi_1,\xi_2)
\] 
and let 
\[
E_0(w):=\overline{E_{\rm el}}(w)+\int_0^L|\mathcal A(s)|\rho_0(s)\vett g\cdot \vett x[w](s)\,ds+\inf\big\{\FF(S) : \textrm{$S\in \mathcal P(\vett x[w]([0,L]),\mathcal C_w)$}\big\},
\]
where $\mathcal C_w \subset \mathcal C(\vett x[w]([0,L]))$ is the class of all $\gamma \in \mathcal C(\vett x[w]([0,L]))$ such that $|{\rm Link}(\gamma,\vett x[w])|=1$.
We can then define $E_\e \colon W \to \R \cup \{+\infty\}$ for any $\e>0$ let as 
\[
E_\e(w):=E^\mathrm{el}(w)+E^\mathrm{g}_\e(w)+E^\mathrm{sf}_\e(w).
\]
We are ready to state our second main result. 

\begin{theorem}\label{main2}

Let $(\e_h)$ be a sequence such that $\e_h \to 0$ as $h\to +\infty$ and let $(w_h)$ be a sequence in $W$ with $\sup_{h \in \N}E_{\e_h}(w_h)\le c$ for some $c>0$. Then, up to a subsequence, $w_h \rightharpoonup w$ in $L^p([0,L];\R^3)$ and $w\in W$. Moreover, the family $\{E_\e\}_{\e>0}$ $\Gamma$-converges to $E_0$ as $\e\to 0^+$ with respect to the weak topology of $L^p([0,L];\R^3)$, namely:
\begin{itemize}
\item[\rm(a)] for any sequence $(\e_h)$ with $\e_h \to 0$, for any $w\in W$ and for any sequence $(w_h)$ in $W$ with $w_h \rightharpoonup w$ in $L^p([0,L];\R^3)$ we have 
\begin{equation}\label{lower}
E_0(w) \le \liminf_{h\to+\infty} E_{\e_h}(w_h);
\end{equation}
\item[\rm(b)] for any $w\in W$ there is a sequence $(\e_h)$ with $\e_h \to 0$ and a sequence $(\bar w_h)$ in $W$ with $\bar w_h \rightharpoonup w$ in $L^p([0,L];\R^3)$ such that 
\begin{equation}\label{upper}
E_0(w) \ge \limsup_{h\to+\infty} E_{\e_h}(\bar w_h).
\end{equation}
\end{itemize}
\end{theorem}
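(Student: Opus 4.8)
The plan is to establish the three assertions separately: compactness, the $\liminf$ inequality (a), and the $\limsup$ inequality (b), treating the three contributions of the energy one at a time.

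\textbf{Compactness.} Since $|\vett t[w]|\equiv 1$ forces $\|\vett x[w]\|_\infty\le |\vett x_0|+L$ independently of $w$, the rescaling $\zeta_i=\e\xi_i$ (under which $\Omega_\e$ becomes $\Omega_1$ and $d\zeta_1d\zeta_2=\e^2\,d\xi_1d\xi_2$) shows that $E^{\mathrm g}_\e$ is uniformly bounded on $W$, using $\rho\in L^\infty(\Omega_1)$. As $E^{\mathrm{sf}}_\e\ge 0$, the bound $E_{\e_h}(w_h)\le c$ yields $E^{\mathrm{el}}(w_h)\le c+C$, and the coercivity \eqref{elas} gives a uniform $L^p$ bound on $w_h$. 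Hence $w_h\rightharpoonup w$ along a subsequence, and $w\in W$ since $W$ is weakly closed.

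\textbf{Liminf inequality (a).} For the elastic term, $\overline{E_{\rm el}}(w)\le\liminf_h E^{\mathrm{el}}(w_h)$ is immediate from the definition of the weakly lower semicontinuous envelope. For the weight, $w_h\rightharpoonup w$ forces $\vett x[w_h]\to\vett x[w]$ in $C^1$ and $\vett t[w_h],\vett d[w_h]\to$ in $C^0$ exactly as in Lemma \ref{cont1}; together with $\rho(s,\e\xi)\to\rho_0(s)$ and the rescaling $\zeta_i=\e_h\xi_i$ this gives the full limit $E^{\mathrm g}_{\e_h}(w_h)\to\int_0^L|\mathcal A(s)|\rho_0(s)\,\vett g\cdot\vett x[w](s)\,ds$. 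For the soap-film term I repeat the proof of Theorem \ref{primo2} with the obstacle $\Lambda_{\e_h}[w_h]$ now collapsing onto the single curve $\vett x[w]([0,L])$: writing $\mu_h=F\hau^2\res S_h$ for near-minimizers $S_h$, extracting a weak-$*$ limit $\mu$, and setting $S_\infty=(\spt\mu)\setminus\vett x[w]([0,L])$, lower semicontinuity gives $\liminf_h E^{\mathrm{sf}}_{\e_h}(w_h)\ge\FF(S_\infty)$. To verify $S_\infty\in\mathcal P(\vett x[w]([0,L]),\mathcal C_w)$ I argue by contradiction as in Theorem \ref{primo2}: if $\gamma\in\mathcal C_w$ with $\dist(\gamma,\vett x[w]([0,L]))=2\delta>0$, then the collapse $\e_h\to0$ together with $\vett x[w_h]\to\vett x[w]$ forces $\Lambda_{\e_h}[w_h]\subset U_\delta(\vett x[w]([0,L]))$ for large $h$, so every nearby $\gamma_y$ avoids the rod and, by continuity of the linking number, lies in $\mathcal C_{\e_h,w_h}$; the Lipschitz-projection estimate then produces a uniform lower bound on $\hau^2(S_h\cap U_\e(\gamma))$, contradicting $\mu(U_{2\e}(\gamma))=0$. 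Summing the three bounds, with the weight contributing a genuine limit, yields \eqref{lower}.

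\textbf{Limsup inequality (b).} This is where I expect the main obstacle. Fix $w\in W$ and choose a relaxing sequence $v_k\rightharpoonup w$ in $L^p$, with $v_k\in W$ and $E^{\mathrm{el}}(v_k)\to\overline{E_{\rm el}}(w)$. I then select $\e_k\to0$ slowly enough that $\|\vett x[v_k]-\vett x[w]\|_\infty<\e_k\eta$ (possible, since the left-hand side tends to $0$) and $\e_k\nu<\Delta_0$, so that by constraint (C5) the tube $\Lambda_{\e_k}[v_k]$ is an embedded solid torus containing $U_{\e_k\eta}(\vett x[v_k]([0,L]))$, and in particular $\vett x[w]([0,L])\subset\Lambda_{\e_k}[v_k]$. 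Let $S$ be a minimizer for $\inf\{\FF(S):S\in\mathcal P(\vett x[w]([0,L]),\mathcal C_w)\}$ (Theorem \ref{thm plateau}) and set $S_k:=S\setminus\INT\Lambda_{\e_k}[v_k]$. The key point is admissibility: any $\gamma\in\mathcal C_{\e_k,v_k}$ avoids $\Lambda_{\e_k}[v_k]$, hence also $\vett x[w]([0,L])$, and since $\vett x[w]$ and $\vett x[v_k]$ are homotopic inside the solid torus $\Lambda_{\e_k}[v_k]\subset\R^3\setminus\gamma(\mathbb S^1)$ their linking numbers with $\gamma$ coincide, giving $|{\rm Link}(\gamma,\vett x[w])|=1$ and $\gamma\in\mathcal C_w$; thus $S_k\cap\gamma(\mathbb S^1)=S\cap\gamma(\mathbb S^1)\ne\emptyset$. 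Because $\Lambda_{\e_k}[v_k]\subset U_{\delta_k}(\vett x[w]([0,L]))$ with $\delta_k\to0$, the removed area obeys $\hau^2(S\cap\INT\Lambda_{\e_k}[v_k])\le\hau^2(S\cap U_{\delta_k}(\vett x[w]([0,L])))\to\hau^2(S\cap\vett x[w]([0,L]))=0$, so $\FF(S_k)\to\FF(S)$ and $\limsup_k E^{\mathrm{sf}}_{\e_k}(v_k)\le\Sigma(w)$. Finally $E^{\mathrm g}_{\e_k}(v_k)\to\int_0^L|\mathcal A(s)|\rho_0(s)\,\vett g\cdot\vett x[w](s)\,ds$ by weak continuity of the weight, and $E^{\mathrm{el}}(v_k)\to\overline{E_{\rm el}}(w)$ by construction, so $\bar w_k:=v_k$ gives $\limsup_k E_{\e_k}(\bar w_k)\le E_0(w)$, establishing \eqref{upper}. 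The genuinely delicate ingredient is this coupled choice of $\e_k$ and $v_k$, which reconciles the possibly nonconstant elastic relaxation with the soap-film construction by placing the limit midline inside the shrinking tube, so that a film spanning the bare curve is transferred into a film spanning the infinitesimally thin rod without losing the spanning condition.
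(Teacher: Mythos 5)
Your proposal is correct and follows the paper's overall architecture: compactness from the coercivity of $E_{\rm el}$ (Proposition \ref{prop-comp}), convergence of the rescaled weight via the change of variables $\zeta_i=\e_h\xi_i$ and uniform convergence of the midlines (Proposition \ref{cont-peso}), the lower bound for the soap-film term by rerunning Theorem \ref{primo2} with the collapsing obstacle (Theorem \ref{primo3}), and a recovery sequence obtained by cutting a minimizer $S$ for the limit problem out of the thin tube. The one place where you genuinely diverge is the $\limsup$ inequality. The paper's Proposition \ref{strong} matches a subsequence $(w_{k_h})$ of an arbitrary weakly convergent sequence to $(\e_h)$ via the estimate \eqref{class}, but in the final assembly it is applied to the \emph{constant} sequence $\bar w_h\equiv w$; that choice only yields $\limsup_h E_{\e_h}(w)\le E^{\rm el}(w)+\int_0^L|\mathcal A|\rho_0\,\vett g\cdot\vett x[w]\,ds+\inf\{\FF(S):S\in\mathcal P(\vett x[w]([0,L]),\mathcal C_w)\}$, which gives \eqref{upper} only when $E^{\rm el}(w)=\overline{E_{\rm el}}(w)$. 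You instead diagonalize: you take a relaxing sequence $v_k\rightharpoonup w$ with $E^{\rm el}(v_k)\to\overline{E_{\rm el}}(w)$ and then choose $\e_k\to0$ slowly enough that $\|\vett x[v_k]-\vett x[w]\|_\infty<\e_k\eta$ and $\e_k\nu\le\Delta_0$, so that (C5) makes $\Lambda_{\e_k}[v_k]$ an embedded tube containing $\vett x[w]([0,L])$, the straight-line homotopy inside the tube transfers linking numbers, and $S_k:=S\setminus\INT\Lambda_{\e_k}[v_k]$ is admissible with $\FF(S_k)\le\FF(S)$. This is exactly the missing coupling that makes the relaxed elastic term come out right, at the (harmless, and permitted by the statement of (b)) cost of letting $(\e_h)$ depend on $w$; otherwise your admissibility and cut-out arguments coincide with the paper's.
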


As a standard consequence of Theorem \ref{main2} we have the next result.

\begin{corollary}\label{cor}
Let $(\e_h)$ be such that $\e_h\to 0$ as $h\to +\infty$. For any $h\in \N$ and for any $\sigma_h\to 0$ let $w_h \in W$ be such that 
\begin{equation}\label{quasimin}
E_{\e_h}(w_h)\le \inf_WE_{\e_h}+\sigma_h.
\end{equation}
Then up to a subsequence $w_h \rightharpoonup w_0$ in $L^p([0,L];\R^3)$ and 
\[
E_0(w_0)=\min_WE_0.
\]
\end{corollary}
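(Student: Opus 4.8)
The plan is to derive the corollary from Theorem~\ref{main2} by the standard argument for convergence of almost-minimizers under $\Gamma$-convergence: compactness gives a weak limit $w_0$, the liminf inequality \eqref{lower} controls $E_0(w_0)$ from above by the realized energies, and the recovery inequality \eqref{upper} tested against an arbitrary competitor forces $w_0$ to be optimal.

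First I would secure the uniform energy bound needed for compactness. Fixing any $v\in W$ with $E_0(v)<+\infty$ and using the recovery sequence $(\bar v_h)$ provided by \eqref{upper} along the prescribed sequence $(\e_h)$, one has $\inf_W E_{\e_h}\le E_{\e_h}(\bar v_h)$ because $\bar v_h\in W$, whence $\limsup_h \inf_W E_{\e_h}\le E_0(v)<+\infty$. Combining this with the almost-minimality \eqref{quasimin} and $\sigma_h\to 0$ yields $\sup_h E_{\e_h}(w_h)\le c$. The first assertion of Theorem~\ref{main2} then gives, up to a subsequence, $w_h\rightharpoonup w_0$ with $w_0\in W$.

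Next I would apply the two $\Gamma$-convergence inequalities. The liminf inequality \eqref{lower} along this subsequence gives $E_0(w_0)\le\liminf_h E_{\e_h}(w_h)$. For optimality, I would fix an arbitrary competitor $v\in W$, take its recovery sequence $(\bar v_h)$ from \eqref{upper}, and chain the estimates
\begin{align*}
E_0(w_0) &\le \liminf_{h\to+\infty} E_{\e_h}(w_h) \le \limsup_{h\to+\infty}\big(\inf_W E_{\e_h}+\sigma_h\big)\\
&=\limsup_{h\to+\infty}\inf_W E_{\e_h}\le \limsup_{h\to+\infty} E_{\e_h}(\bar v_h)\le E_0(v),
\end{align*}
where the second inequality uses \eqref{quasimin}, the equality uses $\sigma_h\to 0$, and the last two steps use $\bar v_h\in W$ together with \eqref{upper}. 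Since $v\in W$ is arbitrary and $w_0\in W$, this shows $E_0(w_0)=\min_W E_0$, the minimum being attained at $w_0$.

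The only delicate point is bookkeeping rather than analysis: the recovery sequence in \eqref{upper} must be available along the \emph{given} sequence $(\e_h)$, not merely along some sequence tending to $0$. Thus I would check that the construction underlying Theorem~\ref{main2}(b)—thinning the tube $\Lambda_\e[w]$ down to its midline $\vett x[w]$ and transporting the spanning surface—produces an admissible competitor for each prescribed $\e_h$. Granting this, every step above is routine, and all the genuine content is already packaged in Theorem~\ref{main2}.
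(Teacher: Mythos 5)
Your proof is correct and is precisely the standard argument the paper has in mind: the paper offers no proof at all, simply calling the corollary ``a standard consequence of Theorem \ref{main2}'', and your chain (uniform bound via a recovery sequence, compactness, liminf inequality, then testing against an arbitrary competitor) is that standard argument carried out correctly. The one caveat you flag is the right one---statement (b) of Theorem \ref{main2} as written only produces a recovery sequence along \emph{some} $(\e_h)$---but Proposition \ref{strong} is in fact proved for an arbitrary prescribed sequence $\e_h\to 0$, so the construction supplies exactly what your argument needs.
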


\subsection{Proof of Theorem \ref{main2}}

Fix a sequence $\e_h \to 0$ as $h\to +\infty$.

\begin{proposition}\label{prop-comp}
Let $(w_h)$ be a sequence in $W$ with $\sup_{h \in \N}E_{\e_h}(w_h)\le c$ for some $c>0$. Then, up to a subsequence, $w_h \rightharpoonup w$ in $L^p([0,L];\R^3)$ and $w\in W$.
\end{proposition}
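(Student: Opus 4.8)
The plan is the standard direct-method compactness argument: I will show that the uniform bound on $E_{\e_h}$ forces a uniform bound on $\|w_h\|_{L^p([0,L];\R^3)}$ via the coercivity of the elastic energy, and then conclude by reflexivity of $L^p$ together with the weak closedness of $W$ recalled above. The only point requiring genuine care is that the remaining two energy contributions must be shown not to spoil this bound, uniformly in $h$ and in $\e$.

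First I would control the gravitational term $E^{\mathrm g}_{\e}$. The key observation is that the midline is parametrized by arc length: since $|\vett t[w](s)|\equiv 1$ we have $\vett x[w](s)=\vett x_0+\int_0^s\vett t[w](\sigma)\,d\sigma$, so that $|\vett x[w](s)-\vett x_0|\le L$ for every $s\in[0,L]$ and \emph{every} $w\in W$; hence $\vett x[w]$ takes values in the fixed ball $\overline{B_L(\vett x_0)}$, independently of $w$ and of $\e$. Moreover on $\Omega_\e$ one has $|\zeta_1|,|\zeta_2|\le \e\nu$, while $|\vett d[w]|=|\vett t[w]\times\vett d[w]|=1$, so the cross-sectional corrections $\zeta_1\vett d[w]+\zeta_2\,\vett t[w]\times\vett d[w]$ are bounded by $2\e\nu$. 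Since $|\Omega_\e|=\e^2\int_0^L|\mathcal A(s)|\,ds$ and $\rho$ is bounded, the prefactor $\e^{-2}$ exactly compensates the measure of $\Omega_\e$, yielding
\[
|E^{\mathrm g}_{\e}(w)|\le \|\rho\|_{L^\infty}\,|\vett g|\,\big(|\vett x_0|+L+2\e\nu\big)\int_0^L|\mathcal A(s)|\,ds=:C,
\]
a constant independent of $w\in W$ and of $\e\le 1$. For the surface term, $E^{\mathrm{sf}}_\e\ge 0$ because $F>0$, so it can only help.

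Combining these facts with the coercivity \eqref{elas}, which provides $c'>0$ with $E^{\mathrm{el}}(w)\ge c'\|w\|_{L^p([0,L];\R^3)}$, the hypothesis $E_{\e_h}(w_h)\le c$ first forces all three contributions to be finite (an infinite $E^{\mathrm{sf}}_{\e_h}$ would make $E_{\e_h}(w_h)=+\infty$), and then gives
\[
c'\,\|w_h\|_{L^p([0,L];\R^3)}\le E^{\mathrm{el}}(w_h)\le E_{\e_h}(w_h)-E^{\mathrm g}_{\e_h}(w_h)\le c+C.
\]
Thus $\sup_h\|w_h\|_{L^p([0,L];\R^3)}<+\infty$. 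Since $p\in(1,+\infty)$, the space $L^p([0,L];\R^3)$ is reflexive, so a subsequence satisfies $w_h\rightharpoonup w$ for some $w\in L^p([0,L];\R^3)$; and since $W$ is weakly closed in $L^p([0,L];\R^3)$, the limit $w$ belongs to $W$, which is the claim.

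The argument is essentially routine; the single step that deserves attention is the uniform estimate on $E^{\mathrm g}_\e$, where one must verify both that the arc-length normalization of the midline confines $\vett x[w]$ to a fixed ball independent of $w$, and that the scaling $\e^{-2}$ is precisely matched to $|\Omega_\e|\sim\e^2$, so that no blow-up of the weight contribution occurs as $\e\to0^+$.
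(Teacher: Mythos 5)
Your argument is correct and follows the same route as the paper, whose proof consists of the single line ``the conclusion follows from the coercivity of $E_{\rm el}$''; you have simply supplied the details that this invocation of coercivity tacitly requires, namely the uniform bound $|E^{\mathrm g}_{\e}(w)|\le C$ (using $|\vett x[w](s)-\vett x_0|\le L$ from the arc-length parametrization and $|\Omega_\e|=\e^2\int_0^L|\mathcal A(s)|\,ds$), the nonnegativity of $E^{\mathrm{sf}}_\e$, reflexivity of $L^p$ for $p\in(1,\infty)$, and the weak closedness of $W$. No gaps.
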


\begin{proof}
The conclusion follows from the coercivity of $E_{\rm el}$.
\end{proof}

The study of the weight term is easy, since the weak convergence $w_h  \rightharpoonup w$ implies the uniform convergence of the midlines. 

\begin{proposition}\label{cont-peso}
For any $w \in W$ and for any sequence $(w_h)$ in $W$ with $w_h \rightharpoonup w$ in $L^p([0,L];\R^3)$ we have 
\begin{equation}\label{lim-w}
\lim_{h\to+\infty}E_{\e_h}^{\rm g}(w_h)=\int_0^L|\mathcal A(s)|\rho_0(s)\vett g\cdot \vett x[w](s)\,ds.
\end{equation}
\end{proposition}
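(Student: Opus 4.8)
The plan is to absorb the prefactor $1/\e_h^2$ by rescaling the cross-section variables, thereby reducing $E^{\rm g}_{\e_h}(w_h)$ to an integral over a fixed domain, and then to pass to the limit by dominated convergence, using the uniform convergence of the midlines together with the defining property of $\rho_0$.

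First I would perform the change of variables $\zeta_j=\e_h\xi_j$ in the definition of $E^{\rm g}_{\e_h}(w_h)$. Since $(\zeta_1,\zeta_2)\in\e_h\mathcal A(s)$ corresponds to $(\xi_1,\xi_2)\in\mathcal A(s)$ and $d\zeta_1\,d\zeta_2=\e_h^2\,d\xi_1\,d\xi_2$, the factor $1/\e_h^2$ cancels, giving
\[
E^{\rm g}_{\e_h}(w_h)=\int_0^L\int_{\mathcal A(s)}\rho(s,\e_h\xi_1,\e_h\xi_2)\,\vett g\cdot\big(\vett x[w_h](s)+\e_h\xi_1\vett d[w_h](s)+\e_h\xi_2\vett t[w_h](s)\times\vett d[w_h](s)\big)\,d\xi_1\,d\xi_2\,ds.
\]
Note that for $\e_h\le\eta/\nu$ the rescaled points $(\e_h\xi_1,\e_h\xi_2)$ lie in $B_{\e_h\nu}(\vett 0)\subset B_\eta(\vett 0)\subset\mathcal A(s)$, so $\rho$ is legitimately evaluated on $\Omega_1$. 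To work on a fixed domain I would rewrite this as an integral over $[0,L]\times B_\nu$ by inserting the characteristic function $\chi_{\mathcal A(s)}(\xi_1,\xi_2)$.

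Next I would identify the pointwise limit of the integrand. From $w_h\rightharpoonup w$ in $L^p$, the same Sobolev-embedding argument used in Lemma \ref{cont1} gives $\vett x[w_h]\to\vett x[w]$ and $\vett t[w_h],\vett d[w_h]$ converging uniformly on $[0,L]$; in particular these quantities are uniformly bounded in $h$. Since $|(\e_h\xi_1,\e_h\xi_2)|\le\e_h\nu\to0$ uniformly for $(\xi_1,\xi_2)\in B_\nu$, the defining property $\rho_0(s)=\lim_{(\xi_1,\xi_2)\to(0,0)}\rho(s,\xi_1,\xi_2)$ yields $\rho(s,\e_h\xi_1,\e_h\xi_2)\to\rho_0(s)$ for a.e. $s$, uniformly in $(\xi_1,\xi_2)$, while the perturbation $\e_h\xi_1\vett d[w_h]+\e_h\xi_2\vett t[w_h]\times\vett d[w_h]\to\vett 0$. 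Hence the integrand converges a.e. to $\chi_{\mathcal A(s)}(\xi_1,\xi_2)\,\rho_0(s)\,\vett g\cdot\vett x[w](s)$.

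Finally I would invoke dominated convergence: on the fixed bounded domain $[0,L]\times B_\nu$ the integrand is bounded in absolute value by $\|\rho\|_{L^\infty(\Omega_1)}\,|\vett g|\,(\sup_h\|\vett x[w_h]\|_\infty+\nu)$, an integrable majorant. Passing to the limit and integrating out the $\xi$-variables gives $\int_0^L|\mathcal A(s)|\,\rho_0(s)\,\vett g\cdot\vett x[w](s)\,ds$, which is the claim. The main obstacle is the control of $\rho$: since $\rho$ is only $L^\infty$ and $\rho_0$ is defined through a pointwise limit at the origin, one must verify that this limit exists for a.e. $s$ and that $\rho(s,\e_h\xi_1,\e_h\xi_2)\to\rho_0(s)$ uniformly in $(\xi_1,\xi_2)\in\mathcal A(s)$ — which holds precisely because $(\xi_1,\xi_2)$ ranges over the bounded set $B_\nu$, forcing $(\e_h\xi_1,\e_h\xi_2)\to\vett 0$ uniformly. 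The remaining ingredients, namely the vanishing of the cross-section perturbation and the uniform convergence of the midlines, are routine.
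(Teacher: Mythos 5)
Your proposal is correct and follows essentially the same route as the paper: the change of variables $\zeta_j=\e_h\xi_j$ to cancel the $1/\e_h^2$ prefactor and reduce to the fixed domain $\Omega_1$, followed by dominated convergence using the uniform convergence of $\vett x[w_h]$, $\vett t[w_h]$, $\vett d[w_h]$ and the defining property of $\rho_0$. Your additional remarks on the $L^\infty$ majorant and on the uniformity of $(\e_h\xi_1,\e_h\xi_2)\to\vett 0$ over $B_\nu$ simply make explicit details the paper leaves implicit.
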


\begin{proof}
By the change of variables $\zeta_i=\e_h\eta_i$, $i=1,2$, we obtain 
\[
\begin{aligned}
\frac{1}{\e_h^2}&\int_{\Omega_{\e_h}}\rho(s,\zeta_1,\zeta_2)\vett g\cdot \vett p_{\e_h}[w_h](s,\zeta_1,\zeta_2)\,dsd\zeta_1d\zeta_2\\
&=\frac{1}{\e_h^2}\int_{\Omega_{\e_h}}\rho(s,\zeta_1,\zeta_2)\vett g\cdot (\vett x[w_h](s)+\zeta_1\vett d[w_h](s)+\zeta_2\vett t[w_h](s)\times \vett d[w_h](s))\,dsd\zeta_1d\zeta_2\\
&=\int_{\Omega_1}\rho(s,\e_h\eta_1,\e_h\eta_2)\vett g\cdot (\vett x[w_h](s)+\e_h\eta_1\vett d[w_h](s)+\e_h\eta_2\vett t[w_h](s)\times \vett d[w_h](s))\,dsd\eta_1d\eta_2.
\end{aligned}
\]
Passing to the limit as $h\to+\infty$, using the fact that $\vett x[w_h]\to \vett x[w]$ uniformly on $[0,L]$ and applying the Dominated Convergence Theorem we conclude.
\end{proof}

Now we pass to the limit in the soap film part of the energy.  First of all we need the following Theorem whose proof requires just minor modifications of the proof of Theorem \ref{primo2}.

\begin{theorem}
\label{primo3}
Let $(w_h)$ be a sequence in $W$ with $w_h \rightharpoonup w$ in $W$ for some $w\in W$. Assume that
\begin{itemize}
\item[\rm(a)] $\forall h\in\N, S_h\in \mathcal{P}(\Lambda_{\e_h}[w_h],\mathcal C_{\e_h,w_h})$;
\item[\rm(b)] $\sup_{h \in \N}\FF(S_h) = \sup_{h \in \N} \inf \{\FF(S) : S\in \mathcal{P}(\Lambda_{\e_h}[w_h],\mathcal C_{\e_h,w_h})\}<+\infty.$
\end{itemize}
Let $\mu_h:= F\mathcal H^2\res S_h$. Then the following three statements hold true:
\begin{equation}\label{c1}
\mu_h \rightharpoonup^{*} \mu \quad \textrm{(up to subsequences)},
\end{equation}
\begin{equation}\label{c2}
\mu \geq F \mathcal H^2\res S_{\infty},\hbox{ where $S_{\infty}= (\hbox{supt}\, \mu) \setminus \vett x[w]([0,L])$ is $2$-rectifiable},
\end{equation}
\begin{equation}\label{c3}
S_\infty \in \mathcal P(\vett x[w]([0,L]),\mathcal C_w).
\end{equation}
\end{theorem}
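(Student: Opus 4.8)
The plan is to run, \emph{mutatis mutandis}, the proof of Theorem \ref{primo2}, the only structural change being that the obstacle degenerates from a solid rod to a one-dimensional curve as $\e_h \to 0$. The genuinely new ingredient is the following \emph{collapse} of the rods onto the midline. Since $w_h \rightharpoonup w$ in $L^p([0,L];\R^3)$ yields $\vett x[w_h]\to\vett x[w]$ uniformly on $[0,L]$ (as already used in Proposition \ref{cont-peso}), and since for each $h$ and each $s$ the pair $(\vett d[w_h](s),\vett t[w_h](s)\times\vett d[w_h](s))$ is orthonormal while $(\zeta_1,\zeta_2)\in\e_h\mathcal A(s)\subset B_{\e_h\nu}(\vett 0)$, every point of $\Lambda_{\e_h}[w_h]$ lies within distance $\|\vett x[w_h]-\vett x[w]\|_\infty+\e_h\nu$ of $\vett x[w]([0,L])$. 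Consequently, for each fixed $\delta>0$ one has $\Lambda_{\e_h}[w_h]\subset U_\delta(\vett x[w]([0,L]))$ for all $h$ large enough.

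For \eqref{c1} and \eqref{c2} I would invoke Theorem 2.5 of \cite{DDG} verbatim, just as in the proof of Theorem \ref{primo2}, after noting that the classes $\mathcal P(\Lambda_{\e_h}[w_h],\mathcal C_{\e_h,w_h})$ and $\mathcal P(\vett x[w]([0,L]),\mathcal C_w)$ are good classes in the sense of \cite[Def.\,2.2]{DDG}. The single point to verify is that whenever $x\in S_\infty$ one has $d(x,\Lambda_{\e_h}[w_h])>0$ for $h$ large; this is immediate from the collapse, since $d(x,\vett x[w]([0,L]))>0$ and $\Lambda_{\e_h}[w_h]$ eventually sits inside an arbitrarily thin tube about the midline.

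The substance is the spanning condition \eqref{c3}. Arguing by contradiction, suppose some $\gamma\in\mathcal C_w$ satisfies $S_\infty\cap\gamma(\mathbb S^1)=\emptyset$. As $\gamma(\mathbb S^1)$ is compact in $\R^3\setminus\vett x[w]([0,L])$ and $S_\infty$ is relatively closed there, I fix $\e>0$ so small that $U_{2\e}(\gamma)$ misses both $S_\infty$ and $\vett x[w]([0,L])$; then $\mu(U_{2\e}(\gamma))=0$ and hence $\lim_h\mathcal H^2(S_h\cap U_\e(\gamma))=0$. Fixing a diffeomorphism $\Phi\colon\mathbb S^1\times B_\e\to U_\e(\gamma)$ with $\Phi_{|\mathbb S^1\times\{0\}}=\gamma$ and setting $\gamma_y:=\Phi_{|\mathbb S^1\times\{y\}}$, each $\gamma_y$ represents $[\gamma]$ in $\pi_1(\R^3\setminus\vett x[w]([0,L]))$. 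Using the collapse, I choose $\delta>0$ with $U_\delta(\vett x[w]([0,L]))\cap U_\e(\gamma)=\emptyset$ and $\Lambda_{\e_h}[w_h]\subset U_\delta(\vett x[w]([0,L]))$ for $h$ large, so that $\gamma_y(\mathbb S^1)\subset\R^3\setminus U_\delta(\vett x[w]([0,L]))$ and therefore $\|\vett x[w_h]-\gamma_y\|_\infty\ge\delta$. This uniform separation keeps the Gauss integrand bounded, so the $W^{1,p}$ convergence of $\vett x[w_h]$ gives ${\rm Link}(\vett x[w_h],\gamma_y)\to{\rm Link}(\vett x[w],\gamma_y)$, the latter being $\pm1$ since $\gamma\in\mathcal C_w$. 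Hence $\gamma_y\in\mathcal C_{\e_h,w_h}$ for $h$ large, and $S_h\in\mathcal P(\Lambda_{\e_h}[w_h],\mathcal C_{\e_h,w_h})$ forces $S_h\cap\gamma_y(\mathbb S^1)\ne\emptyset$ for every $y\in B_\e$. Composing $\Phi^{-1}$ with the projection $\mathbb S^1\times B_\e\to B_\e$ produces a Lipschitz map $\hat\pi$ with $B_\e\subset\hat\pi(S_h\cap U_\e(\gamma))$, whence $\pi\e^2\le(\Lip\hat\pi)^2\,\mathcal H^2(S_h\cap U_\e(\gamma))$. Letting $h\to+\infty$ contradicts $\lim_h\mathcal H^2(S_h\cap U_\e(\gamma))=0$, proving \eqref{c3}.

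The only real obstacle---more bookkeeping than genuine difficulty---is to handle the two simultaneous limits $\e_h\to0$ and $w_h\rightharpoonup w$ correctly, i.e.\ to make precise the Hausdorff collapse $\Lambda_{\e_h}[w_h]\to\vett x[w]([0,L])$ and to check that the limiting obstacle is the one-dimensional midline rather than a degenerate solid rod. Once that is in place, every remaining step transcribes from the proof of Theorem \ref{primo2}, with the single linking condition $|{\rm Link}(\gamma,\vett x[w])|=1$ replacing the chain conditions of the multi-rod setting, and no new analytic input is needed.
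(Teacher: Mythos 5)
Your proposal is correct and follows exactly the route the paper intends: the paper gives no separate proof of Theorem \ref{primo3}, stating only that it requires ``minor modifications'' of the proof of Theorem \ref{primo2}, and your argument supplies precisely those modifications (the Hausdorff collapse $\Lambda_{\e_h}[w_h]\subset U_\delta(\vett x[w]([0,L]))$ for large $h$, followed by a verbatim transcription of the contradiction argument for the spanning condition). No discrepancy with the paper's approach.
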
 

Now we prove the existence of a recovery sequence.

\begin{proposition}\label{strong}
Consider $w \in W$ and  $(w_{k})\subset W$ such that $w_k\rightharpoonup w$ in $W$. For any $\e_h\to 0$, there exists $(w_{k_h})$ subsequence of $(w_k)$ such that 
\begin{equation}\label{limsup-sf}
\inf\{\FF(S) : \textrm{$S\in\mathcal P(\vett x[w]([0, L]),\mathcal C_w)$}\} \ge \limsup_{h\to+\infty}E^{\rm sf}_{\e_h}(w_{k_h}). 
\end{equation}
\end{proposition}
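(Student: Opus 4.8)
The plan is to build the recovery sequence by transplanting a near-optimal competitor of the limiting film problem into the tubular problems, after deleting the (shrinking) rod. Assume the right-hand side of \eqref{limsup-sf} is finite, otherwise there is nothing to prove. Fix $\delta>0$ and choose $S^\ast\in\mathcal P(\vett x[w]([0,L]),\mathcal C_w)$ with $\FF(S^\ast)\le \inf\{\FF(S):S\in\mathcal P(\vett x[w]([0,L]),\mathcal C_w)\}+\delta$.

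First I would record that the weak convergence $w_k\rightharpoonup w$ forces $\vett x[w_k]\to\vett x[w]$ uniformly on $[0,L]$ (the Sobolev-embedding argument already used in the proof of Theorem \ref{primo2}). Since $B_\eta(\vett 0)\subset\mathcal A(s)$ gives $\Lambda_{\e}[w_k]\supset\bigcup_{s}B_{\e\eta}(\vett x[w_k](s))$, I can select a strictly increasing subsequence $(k_h)$, depending only on $(\e_h)$ and not on $\delta$, such that
\[
\|\vett x[w_{k_h}]-\vett x[w]\|_\infty<\e_h\eta \qquad\text{for every }h.
\]
This single inequality is the engine of the argument: it guarantees $\vett x[w]([0,L])\subset\INT\Lambda_{\e_h}[w_{k_h}]$, and that the straight-line homotopy $\vett y_s:=(1-s)\vett x[w_{k_h}]+s\,\vett x[w]$ (a $C^1$ loop through $\vett x_0$ for each $s$ by (C1)) stays inside $\Lambda_{\e_h}[w_{k_h}]$.

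The core step is the inclusion $\mathcal C_{\e_h,w_{k_h}}\subset\mathcal C_w$. Given $\gamma\in\mathcal C_{\e_h,w_{k_h}}$, the embedding $\gamma$ avoids the closed tube $\Lambda_{\e_h}[w_{k_h}]$, hence avoids $\vett x[w]([0,L])$, so $\gamma\in\mathcal C(\vett x[w]([0,L]))$. To transfer the linking constraint I would use that $\gamma$ stays at positive distance from $\Lambda_{\e_h}[w_{k_h}]$, hence is disjoint from every $\vett y_s$; the Gauss linking integral is then continuous along $s\mapsto\vett y_s$, integer-valued, thus constant, giving $|\mathrm{Link}(\gamma,\vett x[w])|=|\mathrm{Link}(\gamma,\vett x[w_{k_h}])|=1$. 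Therefore $\gamma\in\mathcal C_w$.

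It then remains to check that $T_h:=S^\ast\setminus\Lambda_{\e_h}[w_{k_h}]$ is an admissible competitor for $E^{\rm sf}_{\e_h}(w_{k_h})$. Rectifiability is inherited, and $T_h$ is relatively closed in $\R^3\setminus\Lambda_{\e_h}[w_{k_h}]$ because $S^\ast$ is relatively closed in the larger open set $\R^3\setminus\vett x[w]([0,L])$ while the deleted tube is compact. For the spanning condition, any $\gamma\in\mathcal C_{\e_h,w_{k_h}}\subset\mathcal C_w$ meets $S^\ast$, and since $\gamma$ is disjoint from $\Lambda_{\e_h}[w_{k_h}]$ that intersection point already lies in $T_h$; hence $T_h\in\mathcal P(\Lambda_{\e_h}[w_{k_h}],\mathcal C_{\e_h,w_{k_h}})$. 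Monotonicity of the integral, $\FF(T_h)\le\FF(S^\ast)$, then yields $E^{\rm sf}_{\e_h}(w_{k_h})\le\FF(S^\ast)\le\inf\{\FF(S):S\in\mathcal P(\vett x[w]([0,L]),\mathcal C_w)\}+\delta$ for every $h$; taking $\limsup_h$ and then $\delta\to0^+$ gives \eqref{limsup-sf}. The only real obstacle is the linking-number transfer in the core step, which is precisely why the subsequence must be chosen so aggressively that the tube swallows both the limit midline and the entire interpolating homotopy; crucially this choice is independent of $\delta$, so the one subsequence serves all $\delta$ simultaneously.
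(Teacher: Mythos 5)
Your proof is correct and follows essentially the same route as the paper: both take a near-optimal (resp.\ optimal) competitor for the limit problem $\mathcal P(\vett x[w]([0,L]),\mathcal C_w)$ and show that its restriction to the complement of the shrinking tube $\Lambda_{\e_h}[w_{k_h}]$ is admissible for $E^{\rm sf}_{\e_h}(w_{k_h})$. Your write-up is in fact more complete than the paper's: the paper merely asserts that $S_\infty\setminus\Lambda_{\e_h}[w_{k_h}]$ still meets every admissible loop, whereas you justify this via the inclusion $\mathcal C_{\e_h,w_{k_h}}\subset\mathcal C_w$ (choosing the sharper closeness threshold $\e_h\eta$ so that the tube contains both the limit midline and the interpolating homotopy, and then invoking homotopy invariance of the linking number), and your use of a $\delta$-near-minimizer, with a subsequence chosen independently of $\delta$, lets you bypass the appeal to the existence theorem of \cite{DDG}.
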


\begin{proof}
Since $w_k\rightharpoonup w$ in $W$, $\vett x[w_h] \to \vett x[w]$ uniformly on $[0,T]$. Then for every $h \in \N$ there exists $k_h \in \N$ such that 
\begin{equation}\label{class}
\|\vett x[w_{k_h}]-\vett x[w]\|_\infty\le \frac{\e_h}{2}.
\end{equation}
Since we can assume without loss of generality that
\[
\inf\{\FF(S) : \textrm{$S\in\mathcal P(\vett x[w]([0, L]),\mathcal C_w)$}\} <+\infty,
\]
again applying Theorem 2.7 of \cite{DDG}, we find $S_\infty\in \mathcal P(\vett x[w]([0,L]),\mathcal C_w)$ such that 
\[
\FF(S_\infty)=\min \,\{ \FF(S): S \in \mathcal P(\vett x[w]([0,L]),\mathcal C_w)\}.
\]
Now we set 
\[
S_h:=S_\infty\setminus \Lambda_{\e_h}[w_{k_h}]. 
\]
For any $\gamma \in C(\Lambda_{\e_h}[w_{k_h}])$ not homotopic to a point in $\R^3\setminus \Lambda_{\e_h}[w_{k_h}]$ we have 
\[
(S_\infty\setminus \Lambda_{\e_h}[w_{k_h}])\cap \gamma(\mathbb S^1)\ne \emptyset. 
\]
As a consequence, 
\[
\begin{aligned}
\limsup_{h\to+\infty}E_{\e_h}^{\rm sf}(w) &\le \limsup_{h\to+\infty}\mathcal\FF(S_h)\le  \mathcal \FF(S_\infty)=\min\{\mathcal \FF(S) : \textrm{$S\in \P(\vett x[w]([0,L]),\mathcal C_w)$}\},
\end{aligned}
\]
which concludes the proof.
\end{proof}

\begin{proof}
The compactness statement is Proposition \ref{prop-comp}. Inequality \eqref{lower} follows combining \eqref{lim-w} and \eqref{c2} with the subadditivity of the liminf operator. Next, for any $w\in W$, we consider the constant sequence $w_h\equiv w$. Applying Proposition \ref{strong}, for every $\e_h\to 0$, the (unique) subsequence $\bar w_h \equiv w$ of $(w_h)$ satisfies obviously $\bar w_h \rightharpoonup w$ in $L^p([0,L];\R^3)$ and \eqref{limsup-sf}. Inequality \eqref{upper} follows easily combining \eqref{lim-w} and \eqref{limsup-sf} with the superadditivity of the limsup operator.
\end{proof}


\end{document}